\theoremstyle{plain}
\newtheorem{thm}{\protect\theoremname}
\providecommand{\theoremname}{Theorem}
\begin{document}
\title{``Infinitely Often'' Transcendence of Gamma-Function Derivatives}
\author{Michael R. Powers}
\address{Department of Finance, School of Economics and Management, Tsinghua
University, Beijing, China 100084}
\email{powers@sem.tsinghua.edu.cn}
\date{April 21, 2026}
\begin{abstract}
Relatively little is known about the arithmetic properties of Gamma-function
derivatives evaluated at arbitrary points $q\in\mathbb{Q}\setminus\mathbb{Z}_{\leq0}$.
In recent work, we showed that the sequence $\left\{ \Gamma^{\left(n\right)}\left(1\right)\right\} _{n\geq1}$
contains transcendental elements infinitely often. That result is
now generalized to all sequences $\left\{ \Gamma^{\left(n\right)}\left(q\right)\right\} _{n\geq1}$
for $q\in\tfrac{1}{2}\mathbb{Z}\setminus\mathbb{Z}_{\leq0}$. Moreover,
for all such $q$ we derive a lower bound, $\beta\left(N\right)=\max\left\{ 0,\sqrt{N}-5/2\right\} /N$,
for the density of transcendental elements $\Gamma^{\left(n\right)}\left(q\right)$
among $n\in\left\{ 1,2,\ldots,N\right\} $, where $\beta\left(N\right)\asymp N^{-1/2}\rightarrow0$
as $N\rightarrow\infty$. For $q\in\mathbb{Q}\setminus\tfrac{1}{2}\mathbb{Z}$,
we find the somewhat weaker result that at least one of the sequences
$\left\{ \Gamma^{\left(n\right)}\left(q\right)\right\} _{n\geq1}$,
$\left\{ \Gamma^{\left(n\right)}\left(1-q\right)\right\} _{n\geq1}$
contains infinitely many transcendental elements.
\end{abstract}

\keywords{Gamma-function derivatives; Euler-Mascheroni constant; transcendence;
density bounds.}
\maketitle

\section{Introduction}

\begin{singlespace}
\noindent For any $q\in\mathbb{Q}\setminus\mathbb{Z}_{\leq0}$, the
Taylor-series expansion 
\begin{equation}
\Gamma\left(q+t\right)=\sum_{n=0}^{\infty}\Gamma^{\left(n\right)}\left(q\right)\frac{t^{n}}{n!}
\end{equation}
(for sufficiently small $\left|t\right|$) encodes the Gamma-function
derivatives $\Gamma^{\left(n\right)}(q)$ as coefficients. Although
$\Gamma\left(z\right)$ is well understood analytically for $z\in\mathbb{R}\setminus\mathbb{Z}_{\leq0}$
(see, e.g., Artin {[}1{]}), it appears that relatively little is known
unconditionally about the arithmetic nature of $\Gamma^{\left(n\right)}\left(q\right)$
at rational points $q$ (see Rivoal {[}2, 3{]}).
\end{singlespace}

For the cases of $q\in\left\{ 1,\tfrac{1}{2}\right\} $, the sequence
$\left\{ \Gamma^{\left(n\right)}\left(q\right)\right\} _{n\geq1}$
satisfies the recursions
\[
\Gamma^{\left(n\right)}\left(1\right)=-\gamma\Gamma^{\left(n-1\right)}\left(1\right)+{\displaystyle \sum_{k=1}^{n-1}}\dfrac{\left(-1\right)^{k+1}\left(n-1\right)!}{\left(n-k-1\right)!}\zeta\left(k+1\right)\Gamma^{\left(n-k-1\right)}\left(1\right),
\]
and
\[
\Gamma^{\left(n\right)}\left(\tfrac{1}{2}\right)=-g\Gamma^{\left(n-1\right)}\left(\tfrac{1}{2}\right)+{\displaystyle \sum_{k=1}^{n-1}}\dfrac{\left(-1\right)^{k+1}\left(n-1\right)!}{\left(n-k-1\right)!}\left(2^{k+1}-1\right)\zeta\left(k+1\right)\Gamma^{\left(n-k-1\right)}\left(\tfrac{1}{2}\right),
\]
respectively, where $\gamma=0.577215\ldots$ denotes the Euler-Mascheroni
constant, $g=\gamma+2\ln\left(2\right)$, and $\Gamma^{\left(0\right)}\left(q\right)\equiv\Gamma\left(q\right)$.
These identities can be iterated to yield the systems\footnote{See, e.g., Choi and Srivastava {[}4{]}.}
\begin{equation}
\begin{array}{c}
\Gamma^{\left(1\right)}\left(1\right)=-\gamma,\\
\Gamma^{\left(2\right)}\left(1\right)=\gamma^{2}+\zeta\left(2\right),\\
\Gamma^{\left(3\right)}\left(1\right)=-\gamma^{3}-3\zeta\left(2\right)\gamma-2\zeta\left(3\right),\\
\Gamma^{\left(4\right)}\left(1\right)=\gamma^{4}+6\zeta\left(2\right)\gamma^{2}+8\zeta\left(3\right)\gamma+\dfrac{27}{2}\zeta\left(4\right),\\
\Gamma^{\left(5\right)}\left(1\right)=-\gamma^{5}-10\zeta\left(2\right)\gamma^{3}-20\zeta\left(3\right)\gamma^{2}-\dfrac{135}{2}\zeta\left(4\right)\gamma-20\zeta\left(2\right)\zeta\left(3\right)-24\zeta\left(5\right),\\
\cdots
\end{array}
\end{equation}

\begin{singlespace}
\noindent and\pagebreak
\begin{equation}
\begin{array}{c}
\Gamma^{\left(1\right)}\left(\tfrac{1}{2}\right)=-\sqrt{\pi}g,\\
\Gamma^{\left(2\right)}\left(\tfrac{1}{2}\right)=\sqrt{\pi}\left(g^{2}+3\zeta\left(2\right)\right),\\
\Gamma^{\left(3\right)}\left(\tfrac{1}{2}\right)=-\sqrt{\pi}\left(g^{3}+9\zeta\left(2\right)g+14\zeta\left(3\right)\right),\\
\Gamma^{\left(4\right)}\left(\tfrac{1}{2}\right)=\sqrt{\pi}\left(g^{4}+18\zeta\left(2\right)g^{2}+56\zeta\left(3\right)g+\dfrac{315}{2}\zeta\left(4\right)\right),\\
\Gamma^{\left(5\right)}\left(\tfrac{1}{2}\right)=-\sqrt{\pi}\left(g^{5}+30\zeta\left(2\right)g^{3}+140\zeta\left(3\right)g^{2}+\dfrac{1575}{2}\zeta\left(4\right)g+420\zeta\left(2\right)\zeta\left(3\right)+744\zeta\left(5\right)\right),\\
\cdots,
\end{array}
\end{equation}
where: (i) each $\zeta\left(n\right),\:n\in\left\{ 2,4,\ldots\right\} $
is a rational multiple of $\pi^{n}$; (ii) $\zeta\left(3\right)$
is known to be irrational (but not necessarily transcendental);\footnote{See Apéry {[}5{]} and Beukers {[}6{]}.}
and (iii) the arithmetic properties of $\gamma$ and $\zeta\left(n\right),\:n\in\left\{ 5,7,\ldots\right\} $
remain unsettled.\footnote{See Lagarias {[}7{]}, Rivoal {[}2, 3{]}, Ball and Rivoal {[}8{]},
and Fischler, Sprang, and Zudilin {[}9{]}.}
\end{singlespace}

\begin{singlespace}
Although the polynomial forms on the right-hand sides of (2) and (3)
are generally thought to yield transcendental numbers, no specific
value of $\Gamma^{\left(n\right)}\left(1\right)$ or $\Gamma^{\left(n\right)}\left(\tfrac{1}{2}\right)$,
for $n\in\mathbb{Z}_{\geq1}$, is currently known to be transcendental
(or even irrational). In fact, we are aware of only two \emph{unconditional}
transcendence results for these systems:

${\footnotesize {\small \bullet}\:}$ at least one element of each
pair $\left\{ \Gamma^{\left(1\right)}\left(1\right),\Gamma^{\left(2\right)}\left(1\right)\right\} $,
$\left\{ \Gamma^{\left(1\right)}\left(\tfrac{1}{2}\right),\Gamma^{\left(2\right)}\left(\tfrac{1}{2}\right)\right\} $
is transcendental;\footnote{For the former pair, we have $\Gamma^{\left(2\right)}\left(1\right)-\left(\Gamma^{\left(1\right)}\left(1\right)\right)^{2}=\zeta\left(2\right)=\pi^{2}/6$,
but the difference of two algebraic numbers cannot be transcendental.
For the latter pair, we have $\sqrt{\pi}\Gamma^{\left(2\right)}\left(\tfrac{1}{2}\right)-\left(\Gamma^{\left(1\right)}\left(\tfrac{1}{2}\right)\right)^{2}=3\pi\zeta\left(2\right)=\pi^{3}/2$,
which implies $x=\sqrt{\pi}$ is a root of $x^{6}-2\Gamma^{\left(2\right)}\left(\tfrac{1}{2}\right)x+2\left(\Gamma^{\left(1\right)}\left(\tfrac{1}{2}\right)\right)^{2}=0$;
however, the root of a polynomial equation with algebraic coefficients
cannot be transcendental.} and

${\footnotesize {\small \bullet}\:}$ the values $\Gamma^{\left(n\right)}\left(1\right)$
are transcendental infinitely often (as recently proved by Powers
{[}10{]}).\footnote{Fischler and Rivoal {[}11{]} provided a deep conditional transcendence
result, showing that the $\Gamma^{\left(n\right)}\left(q\right)$
are transcendental for all $q\in\mathbb{Q}_{>0}$ if the unproved
conjecture $\mathbf{E}\cap\mathbf{D}=\overline{\mathbb{Q}}$ is true
(where $\mathbf{E}$ and $\mathbf{D}$ are rings of values taken at
algebraic points by arithmetic Gevrey series of orders $-1$ and $1$,
respectively). See also Fischler and Rivoal {[}12{]} and André {[}13{]}.}\smallskip{}

In the present work, we investigate the ``infinitely often'' transcendence
of $\Gamma^{\left(n\right)}\left(q\right)$ for $q\in\mathbb{Q}\setminus\mathbb{Z}_{\leq0}$.
First, Theorem 1 demonstrates that for every positive integer and
every half-integer (i.e., for all $q\in\tfrac{1}{2}\mathbb{Z}\setminus\mathbb{Z}_{\leq0}$),
the sequence $\left\{ \Gamma^{\left(n\right)}\left(q\right)\right\} _{n\geq1}$
contains infinitely many transcendental elements. This analysis leads
directly to Theorem 2, which shows that for all such $q$ the density
of transcendental elements $\Gamma^{\left(n\right)}\left(q\right)$
among $n\in\left\{ 1,2,\ldots,N\right\} $ is bounded below by
\[
\beta\left(N\right)=\dfrac{\max\left\{ 0,\sqrt{N}-5/2\right\} }{N},
\]
where $\beta\left(N\right)\asymp N^{-1/2}\rightarrow0$ as $N\rightarrow\infty$.
Finally, for $q\in\mathbb{Q}\setminus\tfrac{1}{2}\mathbb{Z}$, Theorem
3 provides a weaker (disjunctive) counterpart to Theorem 1, asserting
that at least one of the sequences $\left\{ \Gamma^{\left(n\right)}\left(q\right)\right\} _{n\geq1}$,
$\left\{ \Gamma^{\left(n\right)}\left(1-q\right)\right\} _{n\geq1}$
contains infinitely many transcendental elements.
\end{singlespace}

\section{Values of $q\in\tfrac{1}{2}\mathbb{Z}\setminus\mathbb{Z}_{\protect\leq0}$}

\subsection{``Infinitely Often'' Transcendental Derivatives}

\begin{singlespace}
\phantom{}

\medskip{}

\end{singlespace}

\begin{singlespace}
\noindent We begin by showing that for all $q\in\tfrac{1}{2}\mathbb{Z}\setminus\mathbb{Z}_{\leq0}$,
transcendental elements appear infinitely often in the sequence $\left\{ \Gamma^{\left(n\right)}\left(q\right)\right\} _{n\geq1}$.
This is proved by arguments based on classical identities for $\Gamma\left(t\right)$
along with constraints imposed by the transcendence of $\pi$.
\end{singlespace}

\begin{singlespace}
Essentially, repeated application of the Gamma functional equation
reduces $\Gamma\left(q\pm t\right)$ to either $\Gamma\left(1\pm t\right)$
or $\Gamma\left(\tfrac{1}{2}\pm t\right)$, and Euler's reflection
formula, in conjunction with the functional equation, gives closed
forms involving $\sin(\pi t)$ or $\cos(\pi t)$, respectively. The
resulting Taylor series then possess coefficients that are algebraic
linear combinations of distinct powers of $\pi$, each with a non-vanishing
highest-degree term. Comparing these coefficients with those obtained
by products of the Taylor series in (1) -- under the hypothesis that
only finitely many $\Gamma^{\left(n\right)}\left(q\right)$ are transcendental
-- forces infinitely many distinct powers of $\pi$ to lie in a fixed
finite-dimensional vector space over the algebraic numbers, thereby
yielding a contradiction.\pagebreak{}
\end{singlespace}
\begin{thm}
For all $q\in\tfrac{1}{2}\mathbb{Z}\setminus\mathbb{Z}_{\leq0}$,
elements of the sequence $\left\{ \Gamma^{\left(n\right)}\left(q\right)\right\} _{n\geq1}$
are transcendental infinitely often.
\end{thm}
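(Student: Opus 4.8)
The plan is to argue by contradiction from the transcendence of $\pi$. Suppose that only finitely many of the $\Gamma^{(n)}(q)$ are transcendental, so that there is an integer $M \ge 0$ with $\Gamma^{(n)}(q) \in \overline{\mathbb{Q}}$ for all $n > M$. The main device is a ``reflected product'' identity for $\Gamma(q+t)\Gamma(q-t)$ near $t=0$. Repeated application of the functional equation $\Gamma(z+1)=z\Gamma(z)$ reduces $\Gamma(q+t)$ to $\Gamma(1+t)$ when $q\in\mathbb{Z}_{\ge1}$, and to $\Gamma(\tfrac12+t)$ when $q\in\tfrac12+\mathbb{Z}$, the correction being a finite product of linear factors $c\pm t$ with $c\in\tfrac12\mathbb{Z}$, none of which vanishes at $t=0$ (no pole or zero of $\Gamma$ is met along the way). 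Euler's reflection formula gives $\Gamma(1+t)\Gamma(1-t)=\pi t/\sin(\pi t)$ and $\Gamma(\tfrac12+t)\Gamma(\tfrac12-t)=\pi/\cos(\pi t)$, so for every admissible $q$ one obtains
\[
\Gamma(q+t)\,\Gamma(q-t) = \rho(t)\,\Phi(t),
\]
where $\rho$ is an even rational function, holomorphic and nonzero at $t=0$ with $\rho(0)\in\mathbb{Q}\setminus\{0\}$ and Taylor expansion $\rho(t)=\sum_{i\ge0}\rho_i t^{2i}$ ($\rho_i\in\mathbb{Q}$, $\rho_0\neq0$), and $\Phi(t)=\pi t/\sin(\pi t)$ if $q\in\mathbb{Z}$ (put $\varepsilon=0$) or $\Phi(t)=\pi/\cos(\pi t)$ if $q\in\tfrac12+\mathbb{Z}$ (put $\varepsilon=1$). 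The feature of $\Phi$ that matters is that it is even and analytic near $0$, with $t^{2L}$-coefficient equal to $\phi_L\,\pi^{\varepsilon+2L}$ for some $\phi_L\in\mathbb{Q}$ satisfying $\phi_L\neq0$ for every $L$: indeed $z/\sin z$ has strictly positive rational Taylor coefficients, while $\sec z=\sum_{L\ge0}\bigl(|E_{2L}|/(2L)!\bigr)z^{2L}$ with $E_{2L}\neq0$.

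Next I would equate Taylor coefficients on the two sides. On the right, the coefficient of $t^{2L}$ is $h_L(\pi):=\sum_{i=0}^{L}\rho_i\,\phi_{L-i}\,\pi^{\varepsilon+2(L-i)}$, a polynomial in $\pi$ with rational coefficients whose leading term $\rho_0\phi_L\pi^{\varepsilon+2L}$ is nonzero; hence $\deg h_L=\varepsilon+2L\to\infty$. On the left, the Cauchy product of $\Gamma(q+t)=\sum_n\Gamma^{(n)}(q)\,t^n/n!$ with $\Gamma(q-t)$ has $t^{2L}$-coefficient $\sum_{j=0}^{2L}\frac{(-1)^j}{j!\,(2L-j)!}\,\Gamma^{(j)}(q)\,\Gamma^{(2L-j)}(q)$ (odd-order coefficients vanishing by symmetry, consistent with $\rho\Phi$ being even). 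Under the standing hypothesis, for $L>M$ each summand with $M<j<2L-M$ is a product of two algebraic numbers, and each of the finitely many remaining summands is a rational multiple of $\Gamma^{(2L-j)}(q)\,\Gamma^{(j)}(q)$ with $\min(j,2L-j)\le M$, hence an algebraic multiple of one of $\Gamma^{(0)}(q),\dots,\Gamma^{(M)}(q)$. Therefore this coefficient lies in the fixed finite-dimensional $\overline{\mathbb{Q}}$-subspace $V:=\operatorname{span}_{\overline{\mathbb{Q}}}\{1,\Gamma^{(0)}(q),\dots,\Gamma^{(M)}(q)\}$ of $\mathbb{C}$.

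Comparing, $h_L(\pi)\in V$ for all $L>M$. To conclude, I would invoke the elementary observation that polynomials in $\pi$ with rational coefficients and unbounded degrees cannot all lie in a finite-dimensional $\overline{\mathbb{Q}}$-subspace of $\mathbb{C}$: choosing from $\{h_L\}_{L>M}$ a subsequence with strictly increasing degrees and with more members than $\dim_{\overline{\mathbb{Q}}}V$, any nontrivial $\overline{\mathbb{Q}}$-linear relation among them would, on inspection of the unique highest-degree term, yield a nonzero polynomial over $\overline{\mathbb{Q}}$ vanishing at $\pi$, contradicting the transcendence of $\pi$. This contradiction proves the theorem, and recovers the case $q=1$ of Powers [2] as a special instance. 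The principal difficulty is not a single deep step but the uniformity across all of $\tfrac12\mathbb{Z}\setminus\mathbb{Z}_{\le0}$: one must verify in each class of $q$ (positive integer; nonnegative half-odd-integer; negative half-odd-integer) that the functional-equation reduction really produces a prefactor $\rho$ that is even, rational, regular and nonzero at $0$ with rational Taylor coefficients, and that the relevant coefficients of $\Phi$ are genuinely nonzero so that $\deg h_L$ grows — and, more prosaically, to keep the signs and indices in the Cauchy-product bookkeeping straight.
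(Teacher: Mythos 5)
Your proposal is correct and follows essentially the same route as the paper's proof: reduce $\Gamma(q+t)\Gamma(q-t)$ via the functional equation to the $\xi=1$ or $\xi=\tfrac12$ case, apply the two reflection formulas, note that the $t^{2L}$-coefficient is a polynomial in $\pi$ of degree growing with $L$ and nonzero leading coefficient (via the nonvanishing Bernoulli/Euler-number coefficients), and contradict the finite-dimensionality of $\operatorname{span}_{\overline{\mathbb{Q}}}\{1,\Gamma^{(0)}(q),\dots,\Gamma^{(M)}(q)\}$ using the transcendence of $\pi$. The only cosmetic difference is that you record the prefactor's Taylor coefficients as rational where the paper only claims algebraic; the handling of the negative-half-integer case (prefactor as a quotient, regular and nonzero at $0$ since $q\notin\mathbb{Z}_{\le0}$) matches the paper's.
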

\begin{proof}
\begin{singlespace}
\phantom{}
\end{singlespace}

\begin{singlespace}
\medskip{}
\noindent\foreignlanguage{american}{Let $q\in\frac{1}{2}\mathbb{Z}\setminus\mathbb{Z}_{\leq0}$,
and choose $m\in\mathbb{Z}$ and $\xi\in\left\{ 1,\tfrac{1}{2}\right\} $
so that $q=\xi+m$. Repeated application of the functional equation
$\Gamma\left(z+1\right)=z\,\Gamma\left(z\right)$ generates a rational
function $R_{q}\left(t^{2}\right)\in\overline{\mathbb{Q}}\left(t^{2}\right)$,
analytic at $t=0$ and satisfying $R_{q}\left(0\right)\in\overline{\mathbb{Q}}^{\times}$,
such that
\begin{equation}
\Gamma\left(q+t\right)\Gamma\left(q-t\right)=R_{q}\left(t^{2}\right)\Gamma\left(\xi+t\right)\Gamma\left(\xi-t\right).
\end{equation}
(For example, if $m\geq0$ then we can take $R_{q}\left(t^{2}\right)=\prod_{i=0}^{m-1}\left[\left(\xi+i\right)^{2}-t^{2}\right]$,
whereas if $m<0$ then we can use $R_{q}\left(t^{2}\right)=\prod_{i=m}^{-1}\left[\left(\xi+i\right)^{2}-t^{2}\right]^{-1}$.
In either case, $R_{q}\left(0\right)\neq0$ because $q\notin\mathbb{Z}_{\leq0}$.)}
\end{singlespace}

\selectlanguage{american}%
\begin{singlespace}
Now note that Euler's reflection formula, in conjunction with the
functional equation, provides the identities
\[
\Gamma\left(1+t\right)\Gamma\left(1-t\right)=\frac{\pi t}{\sin\left(\pi t\right)}
\]
and
\[
\Gamma\left(\tfrac{1}{2}+t\right)\Gamma\left(\tfrac{1}{2}-t\right)=\frac{\pi}{\cos\left(\pi t\right)},
\]
which together imply that
\begin{equation}
\Gamma\left(\xi+t\right)\Gamma\left(\xi-t\right)={\displaystyle \sum_{j=0}^{\infty}}a_{j,\xi}\pi^{2\left(j+1-\xi\right)}t^{2j},
\end{equation}
with $a_{j,\xi}\in\mathbb{Q}^{\times}$ for all $j\geq0$ (because
the Taylor-series expansions of $\pi t/\sin\left(\pi t\right)$ and
$\pi/\cos\left(\pi t\right)$ contain only even powers with non-zero
rational coefficients -- given explicitly in terms of Bernoulli and
Euler numbers, respectively). Re-expressing $R_{q}\left(t^{2}\right)$
as
\begin{equation}
R_{q}\left(t^{2}\right)={\displaystyle \sum_{k=0}^{\infty}}b_{k,q}t^{2k},
\end{equation}
with $b_{k,q}\in\overline{\mathbb{Q}}$ for all $k\geq0$ and $b_{0,q}=R_{q}\left(0\right)\neq0$,
we then see from (4), (5), and (6) that
\[
\Gamma\left(q+t\right)\Gamma\left(q-t\right)={\displaystyle \sum_{j=0}^{\infty}}c_{j,q}t^{2j},
\]
where
\begin{equation}
c_{j,q}={\displaystyle \sum_{k=0}^{j}}b_{k,q}a_{j-k,\xi}\pi^{2\left(j-k+1-\xi\right)}.
\end{equation}
In particular, for each $j\geq0$ the coefficient $c_{j,q}$ is a
$\overline{\mathbb{Q}}$-linear combination of $\left\{ \pi^{2\left(1-\xi\right)},\pi^{2\left(2-\xi\right)},\ldots,\pi^{2\left(j+1-\xi\right)}\right\} $
whose highest-degree term is $b_{0,q}a_{j,\xi}\pi^{2\left(j+1-\xi\right)}$,
with $b_{0,q}a_{j,\xi}\in\overline{\mathbb{Q}}^{\times}$.

Alternatively, from the Taylor-series expansions of $\Gamma\left(q+t\right)$
and $\Gamma\left(q-t\right)$ at $t=0$, we find
\[
\Gamma\left(q+t\right)\Gamma\left(q-t\right)=\left({\displaystyle \sum_{\ell_{1}=0}^{\infty}}\Gamma^{\left(\ell_{1}\right)}\left(q\right)\frac{t^{\ell_{1}}}{\ell_{1}!}\right)\left({\displaystyle \sum_{\ell_{2}=0}^{\infty}}\Gamma^{\left(\ell_{2}\right)}\left(q\right)\frac{\left(-t\right)^{\ell_{2}}}{\ell_{2}!}\right)
\]
\[
={\displaystyle \sum_{j=0}^{\infty}}\left({\displaystyle \sum_{i=0}^{2j}}\frac{\left(-1\right)^{i}\Gamma^{\left(i\right)}\left(q\right)\Gamma^{\left(2j-i\right)}\left(q\right)}{i!\left(2j-i\right)!}\right)t^{2j},
\]
implying
\begin{equation}
c_{j,q}={\displaystyle \sum_{i=0}^{2j}}\frac{\left(-1\right)^{i}\Gamma^{\left(i\right)}\left(q\right)\Gamma^{\left(2j-i\right)}\left(q\right)}{i!\left(2j-i\right)!}
\end{equation}
for all $j\geq0$.

Now suppose (for purposes of contradiction) that only finitely many
$\Gamma^{\left(n\right)}\left(q\right)$ are transcendental, and choose
$n^{*}\in\mathbb{Z}_{\geq1}$ such that $\Gamma^{\left(n\right)}\left(q\right)\in\overline{\mathbb{Q}}$
for all $n>n^{*}$, with
\[
\mathcal{V}=\textrm{span}_{\overline{\mathbb{Q}}}\left\{ 1,\Gamma\left(q\right),\Gamma^{\left(1\right)}\left(q\right),\Gamma^{\left(2\right)}\left(q\right),\ldots,\Gamma^{\left(n^{*}\right)}\left(q\right)\right\} ,
\]
denoting a finite-dimensional $\overline{\mathbb{Q}}$-vector space.
For any $j>n^{*}$, each summand in (8) possesses at least one algebraic
factor $\Gamma^{\left(i_{j}\right)}\left(q\right)$ for some $i_{j}>n^{*}$
(because $i+\left(2j-i\right)=2j>2n^{*}\Longrightarrow\left(i>n^{*}\right)\vee\left(2j-i>n^{*}\right)$),
and therefore lies in $\mathcal{V}$. Thus, $c_{j,q}\in\mathcal{V}$
for all $j>n^{*}$.

At the same time, it follows from (7) that the family $\left\{ c_{j,q}\right\} _{j>n^{*}}$
is $\overline{\mathbb{Q}}$-linearly independent. This is because:
(i) if there exists a non-trivial $\overline{\mathbb{Q}}$-linear
dependence $\lambda_{1}c_{j_{1},q}+\lambda_{2}c_{j_{2},q}+\cdots+\lambda_{\mu}c_{j_{\mu},q}=0$
(with $j_{1}<j_{2}<\cdots<j_{\mu}$) for arbitrary $\mu\in\mathbb{Z}_{\geq2}$,
then the term $\pi^{2\left(j_{\mu}+1-\xi\right)}$ appears only in
$c_{j_{\mu},q}$ (with non-zero algebraic coefficient $b_{0,q}a_{j_{\mu},\xi}$),
forcing $\lambda_{\mu}=0$ (because otherwise $\pi$ would be algebraic);
and (ii) iterating downward similarly forces $\lambda_{\mu-1}=\lambda_{\mu-2}=\cdots=\lambda_{1}=0$.

Consequently, for arbitrarily large $\mu$, the $\mu$ elements $\left\{ c_{n^{*}+1,q},c_{n^{*}+2,q},\ldots,c_{n^{*}+\mu,q}\right\} $
must be $\overline{\mathbb{Q}}$-linearly independent and lie in $\mathcal{V}$,
which is impossible because $\mathcal{V}$ is finite-dimensional.
This contradiction confirms that $\Gamma^{\left(n\right)}\left(q\right)$
is transcendental infinitely often.
\end{singlespace}
\end{proof}

\subsection{Lower Bound for Transcendental Densities}

\begin{singlespace}
\phantom{}

\medskip{}

\end{singlespace}

\begin{singlespace}
\noindent Let
\[
\delta_{q}\left(N\right)=\dfrac{\#\left\{ 1\leq n\leq N:\Gamma^{\left(n\right)}\left(q\right)\textrm{ is transcendental}\right\} }{N},\quad N\in\mathbb{Z}_{\geq1}
\]
denote the density of transcendental elements $\Gamma^{\left(n\right)}\left(q\right)$
among $n\in\left\{ 1,2,\ldots,N\right\} $, with $q\in\frac{1}{2}\mathbb{Z}\setminus\mathbb{Z}_{\leq0}$
fixed. We now provide a lower bound for this density.
\end{singlespace}
\begin{thm}
For any $q\in\frac{1}{2}\mathbb{Z}\setminus\mathbb{Z}_{\leq0}$,
\[
\delta_{q}\left(N\right)\geq\dfrac{\max\left\{ 0,\left\lceil \sqrt{2\left\lfloor N/2\right\rfloor +9/4}-5/2\right\rceil \right\} }{N}\geq\dfrac{\max\left\{ 0,\sqrt{N}-5/2\right\} }{N}=\beta\left(N\right),
\]
where $\beta\left(N\right)\asymp N^{-1/2}$ as $N\rightarrow\infty$.
\end{thm}
\begin{proof}
\begin{singlespace}
\phantom{}
\end{singlespace}

\begin{singlespace}
\medskip{}
\noindent Fixing $q\in\frac{1}{2}\mathbb{Z}\setminus\mathbb{Z}_{\leq0}$ and
$N\in\mathbb{Z}_{\geq1}$, consider the set of coefficients $\left\{ c_{j,q}\right\} $
for $j\in\left\{ 0,1,\ldots,\left\lfloor N/2\right\rfloor \right\} $.
From (8), we know that each $c_{j,q}$ is a $\overline{\mathbb{Q}}$-linear
combination of products $\Gamma^{\left(u\right)}\left(q\right)\Gamma^{\left(v\right)}\left(q\right)$,
with $0\leq u,v\le N$.
\end{singlespace}

Assume that the number of transcendental elements $\Gamma^{\left(n\right)}\left(q\right)$
among $n\in\left\{ 1,2,\ldots,N\right\} $ is given by $N\delta_{q}\left(N\right)=\tau$
for some integer $\tau\geq0$, and let $\alpha_{1},\alpha_{2},\dots,\alpha_{\tau^{*}}$
denote the $\tau^{*}\leq\tau+1$ distinct transcendental numbers within
the set $\left\{ \Gamma\left(q\right),\Gamma^{\left(1\right)}\left(q\right),\Gamma^{\left(2\right)}\left(q\right),\ldots,\Gamma^{\left(N\right)}\left(q\right)\right\} $,
so that every element of this set lies in $\overline{\mathbb{Q}}\cup\left\{ \alpha_{1},\alpha_{2},\dots,\alpha_{\tau^{*}}\right\} $.
It then follows that every product $\Gamma^{\left(u\right)}\left(q\right)\Gamma^{\left(v\right)}\left(q\right)$
(with $0\leq u,v\le N$) is contained in the $\overline{\mathbb{Q}}$-vector
space\foreignlanguage{american}{
\[
\mathcal{U}_{\tau^{*}}=\textrm{span}_{\overline{\mathbb{Q}}}\left\{ 1,\alpha_{1},\alpha_{2},\dots,\alpha_{\tau^{*}},\alpha_{i}\alpha_{i^{\prime}}:1\leq i\leq i^{\prime}\leq\tau^{*}\right\} ,
\]
implying} $c_{j,q}\in\mathcal{U}_{\tau^{*}}$.

Since the generating set for $\mathcal{U}_{\tau^{*}}$ has cardinality
\[
1+\tau^{*}+\frac{\tau^{*}\left(\tau^{*}+1\right)}{2}=\frac{\left(\tau^{*}+1\right)\left(\tau^{*}+2\right)}{2}\leq\frac{\left(\tau+2\right)\left(\tau+3\right)}{2},
\]
one can see that 
\[
\dim_{\overline{\mathbb{Q}}}\left(\mathcal{U}_{\tau^{*}}\right)\le\frac{\left(\tau+2\right)\left(\tau+3\right)}{2}.
\]
However, we also know (from arguments similar to those employed in
the proof of Theorem 1) that the elements $\left\{ c_{0,q},c_{1,q},\dots,c_{\left\lfloor N/2\right\rfloor ,q}\right\} $
are $\overline{\mathbb{Q}}$-linearly independent, forcing the span
of this set to have dimension $\left\lfloor N/2\right\rfloor +1$.
Given that this span is contained in $\mathcal{U}_{\tau^{*}}$, it
follows that 
\[
\left\lfloor N/2\right\rfloor +1\le\dim_{\overline{\mathbb{Q}}}\left(\mathcal{U}_{\tau^{*}}\right)\le\frac{\left(\tau+2\right)\left(\tau+3\right)}{2}
\]
\[
\Longrightarrow2\left\lfloor N/2\right\rfloor +2\le\left(\tau+2\right)\left(\tau+3\right)
\]
\[
\Longrightarrow\tau\ge\sqrt{2\left\lfloor N/2\right\rfloor +9/4}-5/2.
\]
Thus,
\[
N\delta_{q}\left(N\right)\ge\max\left\{ 0,\left\lceil \sqrt{2\left\lfloor N/2\right\rfloor +9/4}-5/2\right\rceil \right\} \geq\max\left\{ 0,\sqrt{N}-5/2\right\} 
\]
\[
\Longrightarrow\delta_{q}\left(N\right)\geq\beta\left(N\right)=\dfrac{\max\left\{ 0,\sqrt{N}-5/2\right\} }{N},
\]
where $\beta\left(N\right)\asymp N^{-1/2}$ as $N\rightarrow\infty$.
\end{proof}
\begin{singlespace}
Although the indicated lower bound ($\beta\left(N\right)$) is unaffected
by $q$, this does not imply that $\delta_{q}\left(N\right)$ itself
is constant over $q\in\frac{1}{2}\mathbb{Z}\setminus\mathbb{Z}_{\leq0}$.
After all, $\beta\left(N\right)\asymp N^{-1/2}$ converges to $0$
in the limit, thus providing a rather weak (and not very informative)
lower bound.\footnote{It may be of interest to note that $\beta\left(N\right)$ is larger
in magnitude than the asymptotic lower bound for the density of irrational
$\zeta\left(n\right)$ among $n\in\left\{ 3,5,\ldots,N\right\} $
given by Fischler, Sprang, and Zudilin {[}9{]}:
\[
\beta_{\zeta}\left(N\right)\asymp\dfrac{2^{\left(1-\varepsilon\right)\ln\left(N\right)/\ln\left(\ln\left(N\right)\right)}}{\left(N-1\right)/2}\asymp N^{\left(1-\varepsilon\right)\ln\left(2\right)/\ln\left(\ln\left(N\right)\right)-1},
\]
for arbitrarily small $\varepsilon>0$.}
\end{singlespace}

\section{Values of $q\in\mathbb{Q}\setminus\tfrac{1}{2}\mathbb{Z}$}

\begin{singlespace}
\noindent In this section, we consider transcendental elements in
sequences $\left\{ \Gamma^{\left(n\right)}\left(q\right)\right\} _{n\geq1}$
for $q\in\mathbb{Q}\setminus\tfrac{1}{2}\mathbb{Z}$. Although the
approach is similar to that of Section 2.1, it is restricted by the
fact that Euler's reflection formula\foreignlanguage{american}{ (in
conjunction with the functional equation)} does not yield a simple
closed form for $\Gamma\left(q+t\right)\Gamma\left(q-t\right)$ in
this case. Instead, the reflection formula naturally couples the points
$q$ and $1-q$, leading to a disjunctive result for the sequences
$\left\{ \Gamma^{\left(n\right)}\left(q\right)\right\} _{n\geq1}$
and $\left\{ \Gamma^{\left(n\right)}\left(1-q\right)\right\} _{n\geq1}$.
\end{singlespace}
\begin{thm}
For $q\in\mathbb{Q}\setminus\tfrac{1}{2}\mathbb{Z}$, at least one
of the sequences $\left\{ \Gamma^{\left(n\right)}\left(q\right)\right\} _{n\geq1}$,
$\left\{ \Gamma^{\left(n\right)}\left(1-q\right)\right\} _{n\geq1}$
contains infinitely many transcendental elements.
\end{thm}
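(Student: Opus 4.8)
The plan is to run the argument of Theorem~1 with $\Gamma(q+t)\Gamma(q-t)$ replaced by $\Gamma(q+t)\Gamma(1-q-t)$, since for $q\notin\tfrac{1}{2}\mathbb{Z}$ the reflection formula no longer simplifies $\Gamma(q\pm t)$ but still applies to this coupled product. Concretely, I would use Euler's reflection formula $\Gamma(z)\Gamma(1-z)=\pi/\sin(\pi z)$ at $z=q+t$: since $1-(q+t)=(1-q)-t$ and both $q$ and $1-q$ are non-integers (so neither lies in $\mathbb{Z}_{\leq0}$), the functions $\Gamma(q+t)$ and $\Gamma((1-q)-t)$ are analytic at $t=0$, yielding the identity of analytic functions
\[
\Gamma(q+t)\,\Gamma((1-q)-t)=\frac{\pi}{\sin(\pi(q+t))}
\]
in a neighborhood of $t=0$. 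It is this identity that forces the two points $q$ and $1-q$ to be handled together, and it is the source of the disjunctive conclusion.

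Next I would expand both sides. On the right, $\sin(\pi(q+t))=\sin(\pi q)\cos(\pi t)+\cos(\pi q)\sin(\pi t)$, and since $q\in\mathbb{Q}$ both $\sin(\pi q)$ and $\cos(\pi q)$ are algebraic with $\sin(\pi q)\neq0$ (because $q\notin\mathbb{Z}$); substituting $u=\pi t$ then makes $1/\sin(\pi q+u)$ analytic at $u=0$ with algebraic Taylor coefficients, whence
\[
\frac{\pi}{\sin(\pi(q+t))}=\sum_{n=0}^{\infty}d_{n}t^{n},\qquad d_{n}=\pi^{\,n+1}e_{n},\quad e_{n}\in\overline{\mathbb{Q}},
\]
with $e_{n}\neq0$ for infinitely many $n$ since the left-hand side has a pole at $t=-q$ and so is not a polynomial. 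Comparing this with the product of the Taylor series of $\Gamma(q+t)$ and $\Gamma((1-q)-t)$ gives
\[
d_{n}=\sum_{i=0}^{n}\frac{(-1)^{\,n-i}\,\Gamma^{(i)}(q)\,\Gamma^{(n-i)}(1-q)}{i!\,(n-i)!}.
\]

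Then I would argue by contradiction exactly as in Theorem~1. Suppose both $\{\Gamma^{(n)}(q)\}_{n\geq1}$ and $\{\Gamma^{(n)}(1-q)\}_{n\geq1}$ contain only finitely many transcendental elements, and pick $n^{*}$ with $\Gamma^{(n)}(q),\Gamma^{(n)}(1-q)\in\overline{\mathbb{Q}}$ for all $n>n^{*}$. Set
\[
\mathcal{W}=\textrm{span}_{\overline{\mathbb{Q}}}\Big(\{1\}\cup\{\Gamma^{(i)}(q):0\leq i\leq n^{*}\}\cup\{\Gamma^{(k)}(1-q):0\leq k\leq n^{*}\}\Big),
\]
a finite-dimensional $\overline{\mathbb{Q}}$-vector space. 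For $n>2n^{*}$, every summand in the formula for $d_{n}$ has $i>n^{*}$ or $n-i>n^{*}$, so at least one of its two $\Gamma$-factors is algebraic, making the summand an algebraic multiple of $1$, of some $\Gamma^{(i)}(q)$ with $i\leq n^{*}$, or of some $\Gamma^{(k)}(1-q)$ with $k\leq n^{*}$; hence $d_{n}\in\mathcal{W}$. On the other hand, for the infinitely many $n>2n^{*}$ with $e_{n}\neq0$, $d_{n}=\pi^{\,n+1}e_{n}$ is a non-zero algebraic multiple of $\pi^{\,n+1}$, and since distinct powers of the transcendental number $\pi$ are linearly independent over $\overline{\mathbb{Q}}$, these $d_{n}$ form an infinite $\overline{\mathbb{Q}}$-linearly independent subset of the finite-dimensional $\mathcal{W}$ --- a contradiction. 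Hence at least one of the two sequences contains infinitely many transcendental elements.

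I expect the main difficulty, relative to Theorem~1, to be structural rather than computational: because the reflection side is no longer a power series in $t^{2}$ and because $d_{n}$ genuinely mixes $\Gamma^{(i)}(q)$ with $\Gamma^{(k)}(1-q)$, the finite-dimensional target space $\mathcal{W}$ must be assembled from both sequences at once --- which is exactly why the theorem can only be stated in disjunctive form. The one step that needs a little care is verifying that the Taylor coefficients of $\pi/\sin(\pi(q+t))$ are algebraic multiples of strictly increasing powers of $\pi$, infinitely many of them non-zero; I would establish this via the substitution $u=\pi t$, the algebraicity and non-vanishing of $\sin(\pi q)$, and the existence of a pole of the function near the origin.
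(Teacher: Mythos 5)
Your proposal is correct and follows essentially the same route as the paper: the reflection identity $\Gamma(q+t)\Gamma(1-q-t)=\pi/\sin(\pi(q+t))$, the expansion of the right side into algebraic multiples of increasing powers of $\pi$ (non-zero infinitely often since the function is not a polynomial), the Cauchy-product comparison, and the finite-dimensionality contradiction with the transcendence of $\pi$. Your substitution $u=\pi t$ together with the algebraicity and non-vanishing of $\sin(\pi q)$ is just a slightly more explicit justification of the coefficient structure than the paper's chain-rule remark.
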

\begin{proof}
\begin{singlespace}
\phantom{}
\end{singlespace}

\begin{singlespace}
\medskip{}
\noindent Let\foreignlanguage{american}{ $q\in\mathbb{Q}\setminus\frac{1}{2}\mathbb{Z}$,
and note that Euler's reflection formula gives
\[
\Gamma\left(q+t\right)\Gamma\left(1-q-t\right)=\dfrac{\pi}{\sin\left(\pi\left(q+t\right)\right)}
\]
\[
={\displaystyle \sum_{j=0}^{\infty}}d_{j,q}\pi^{j+1}t^{j},
\]
with $d_{j,q}\in\overline{\mathbb{Q}}$ for all $j\geq0$ (because
$q\in\mathbb{Q}$ implies $\sin\left(\pi q\right),\cos\left(\pi q\right)\in\overline{\mathbb{Q}}$
and each $t$-derivative introduces exactly one factor of $\pi$ by
the chain rule) and $d_{j,q}\neq0$ for infinitely many $j$ (because
$\pi/\sin\left(\pi\left(q+t\right)\right)$ is not a polynomial in
$t$). Moreover, from the Taylor-series expansions of $\Gamma\left(q+t\right)$
and $\Gamma\left(1-q-t\right)$ at $t=0$, we see
\[
\Gamma\left(q+t\right)\Gamma\left(1-q-t\right)=\left({\displaystyle \sum_{\ell_{1}=0}^{\infty}}\Gamma^{\left(\ell_{1}\right)}\left(q\right)\frac{t^{\ell_{1}}}{\ell_{1}!}\right)\left({\displaystyle \sum_{\ell_{2}=0}^{\infty}}\Gamma^{\left(\ell_{2}\right)}\left(1-q\right)\frac{\left(-t\right)^{\ell_{2}}}{\ell_{2}!}\right)
\]
\[
={\displaystyle \sum_{j=0}^{\infty}}\left({\displaystyle \sum_{i=0}^{j}}\frac{\left(-1\right)^{j-i}\Gamma^{\left(i\right)}\left(q\right)\Gamma^{\left(j-i\right)}\left(1-q\right)}{i!\left(j-i\right)!}\right)t^{j},
\]
implying
\begin{equation}
d_{j,q}\pi^{j+1}={\displaystyle \sum_{i=0}^{j}}\frac{\left(-1\right)^{j-i}\Gamma^{\left(i\right)}\left(q\right)\Gamma^{\left(j-i\right)}\left(1-q\right)}{i!\left(j-i\right)!}
\end{equation}
for all $j\geq0$.}
\end{singlespace}

\selectlanguage{american}%
\begin{singlespace}
Now suppose (for purposes of contradiction) that both sequences $\left\{ \Gamma^{\left(n\right)}\left(q\right)\right\} _{n\geq1}$
and $\left\{ \Gamma^{\left(n\right)}\left(1-q\right)\right\} _{n\geq1}$
contain only finitely many transcendental elements, and choose $n^{*}\in\mathbb{Z}_{\geq1}$
such that both $\Gamma^{\left(n\right)}\left(q\right)\in\overline{\mathbb{Q}}$
and $\Gamma^{\left(n\right)}\left(1-q\right)\in\overline{\mathbb{Q}}$
for all $n>n^{*}$, with
\[
\mathcal{W}=\textrm{span}_{\overline{\mathbb{Q}}}\left\{ 1,\Gamma\left(q\right),\Gamma\left(1-q\right),\Gamma^{\left(1\right)}\left(q\right),\Gamma^{\left(1\right)}\left(1-q\right),\Gamma^{\left(2\right)}\left(q\right),\Gamma^{\left(2\right)}\left(1-q\right),\ldots,\Gamma^{\left(n^{*}\right)}\left(q\right),\Gamma^{\left(n^{*}\right)}\left(1-q\right)\right\} .
\]
For any $j>2n^{*}$, each summand in (9) has at least one algebraic
factor (because $i+\left(j-i\right)=j>2n^{*}$), and therefore lies
in $\mathcal{W}$. Thus, $d_{j,q}\pi^{j+1}\in\mathcal{W}$ for all
$j>2n^{*}$. However, since $d_{j,q}\neq0$ for infinitely many $j$,
it follows that $\mathcal{W}$ contains infinitely many distinct powers
of $\pi$, forcing a non-trivial polynomial relation over $\overline{\mathbb{Q}}$
that is satisfied by $\pi$. This contradicts the transcendence of
$\pi$, thereby confirming that at least one of the sequences $\left\{ \Gamma^{\left(n\right)}\left(q\right)\right\} _{n\geq1}$,
$\left\{ \Gamma^{\left(n\right)}\left(1-q\right)\right\} _{n\geq1}$
contains infinitely many transcendental elements.
\end{singlespace}
\end{proof}

\section{Conclusion}

\begin{singlespace}
\noindent In the present study, we investigated the ``infinitely
often'' transcendence of $\Gamma^{\left(n\right)}\left(q\right)$
for $q\in\mathbb{Q}\setminus\mathbb{Z}_{\leq0}$. Theorem 1 showed
that for all $q\in\tfrac{1}{2}\mathbb{Z}\setminus\mathbb{Z}_{\leq0}$,
the sequence $\left\{ \Gamma^{\left(n\right)}\left(q\right)\right\} _{n\geq1}$
contains infinitely many transcendental elements, and Theorem 2 gave
a lower bound, $\beta\left(N\right)\asymp N^{-1/2}$, for the density
of transcendental elements $\Gamma^{\left(n\right)}\left(q\right)$
among $n\in\left\{ 1,2,\ldots,N\right\} $. Although it was not possible
to extend the full implications of Theorem 1 to $q\in\mathbb{Q}\setminus\tfrac{1}{2}\mathbb{Z}$,
we were able to provide a weaker (disjunctive) result, showing that
at least one of the sequences $\left\{ \Gamma^{\left(n\right)}\left(q\right)\right\} _{n\geq1}$,
$\left\{ \Gamma^{\left(n\right)}\left(1-q\right)\right\} _{n\geq1}$
contains infinitely many transcendental elements.
\end{singlespace}

Natural directions for further research include: (i) improving Theorem
3 by removing the disjunctive limitation; and (ii) for fixed $n\in\mathbb{Z}_{\geq1}$
and $\kappa\in\left[0,1\right)\cap\mathbb{Q}$, ascertaining lower
bounds on the densities of transcendental elements $\Gamma^{\left(n\right)}\left(m+\kappa\right)$
among $\left|m\right|\in\left\{ 0,1,\ldots,M\right\} $. Progress
toward the latter objective is presented in Powers {[}14{]}.\medskip{}
\medskip{}

\end{document}